\documentclass[11pt,reqno]{amsart}
\usepackage[T1]{fontenc}
\usepackage{lmodern}
\usepackage{amsmath,amssymb,mathtools}
\usepackage{microtype}
\usepackage[a4paper,margin=26mm]{geometry}
\usepackage[hidelinks]{hyperref}
\hypersetup{pdftitle={Non-compact convex hulls in CAT(0) spaces revisited},pdfsubject={A three-point refinement of the graph-and-cone construction},pdfauthor={Alexander Lytchak}}
\newtheorem{theorem}{Theorem}[section]
\newtheorem{proposition}[theorem]{Proposition}
\newtheorem{lemma}[theorem]{Lemma}

\theoremstyle{remark}

\newcommand{\conv}{\operatorname{conv}}
\newcommand{\cconv}{\overline{\operatorname{conv}}}
\newcommand{\pconv}{\operatorname{conv}_{\pi}}

\newcommand{\clB}{\overline B}
\numberwithin{equation}{section}
\allowdisplaybreaks[1]
\title[Non-compact convex hulls revisited]{Non-compact convex hulls in CAT(0) spaces revisited}
\author[]{Alexander Lytchak}
\date{}
\subjclass[2020]{53C23, 51F99, 52A05}
\keywords{Hadamard space, convex hull, Euclidean cone}

\begin{document}
\begin{abstract}
We simplify Goodwin's construction of a non-compact closed convex hull in a
CAT(0) space and show that three points suffice for the example.
\end{abstract}
\maketitle

\section{Introduction}
In \cite{Goo26}, Ariel Goodwin answered a basic question in the theory of
non-positive curvature: must the closed convex hull of a compact subset of a
complete CAT(0) space be compact? The question had been open since
Gromov's foundational work \cite[6.B$_1$(f)]{Gro93}. It was restated, for
example, in \cite[Section~9.O]{AKP24}, \cite[Section~3.1]{Bac23}, and
\cite[Section~12]{Pet25}. Indications that the answer might be negative
appeared in the work of Monod \cite{Mon16}, Lytchak--Petrunin \cite{LP22},
and Basso--Krifka--Soultanis \cite{BKS24}.

The presentation in \cite{Goo26} relies on a nontrivial existence theorem
for regular graphs of large girth. In this note we give a simplified
version of the argument, starting from scratch.

The main result is the following minor refinement of \cite{Goo26}.

\begin{theorem}\label{thm:main}
There exist a proper, unbounded metric graph $Y$ of girth at least $2\pi$
and a three-point subset $A\subset Y$ with the following property.

Under the canonical identification of $Y$ with the unit sphere about
the tip $o$ of the Euclidean cone $X=C(Y)$, the convex hull of $A$ in the
CAT(0) space $X$ contains a neighborhood of $o$. In particular, the closure
of the convex hull of $A$ is not compact.
\end{theorem}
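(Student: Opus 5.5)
The plan is to exploit the explicit geometry of the Euclidean cone $X=C(Y)$ over a CAT(1) space. Because $Y$ has girth at least $2\pi$, it is CAT(1), so $X$ is CAT(0). For points $sx, ty$ with $x,y\in Y$ and $d(x,y)<\pi$, the segment between them projects radially onto the unique $Y$-geodesic $[x,y]$. By planar comparison, the radial function along this segment is bounded below by $st\sin d(x,y)/|sx-ty|$, and in particular by $\min(s,t)\cos(d(x,y)/2)$. If $d(x,y)\ge\pi$, the segment passes through $o$.

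So if $A$ contains two points at $Y$-distance $\ge\pi$, then $o\in\conv(A)$. In that case $\conv(A)$ is star-shaped about $o$ and can be written as $\{ty: y\in D,\ 0\le t\le r(y)\}$ for some direction set $D\subset Y$ and radius function $r$. The theorem reduces to constructing $Y$ and $A$ so that $D=Y$ and $\inf_Y r>0$.

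The closure properties of $(D,r)$ that I will use are the following. If $x,y\in D$ with $d(x,y)<\pi$, then every $z\in[x,y]$ lies in $D$, and
\[
r(z)\ \ge\ \min\{r(x),r(y)\}\cos\bigl(d(x,y)/2\bigr).
\]
The seed is a single cycle $C_0$ of length exactly $2\pi$. Its cone is a Euclidean plane, and I will choose $A$ to be three points on $C_0$ forming an equilateral triangle, i.e.\ at mutual distances $2\pi/3$. Inside that plane the hull is a triangle containing a disc about $o$. Hence $C_0\subset D$ with $r\ge 1/2$ on $C_0$, up to the exact constant. Note that this seed uses the equal-distance configuration rather than the $d\ge\pi$ criterion to obtain $o\in\conv(A)$.

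Next I build $Y=\bigcup_n Y_n$ inductively. $Y_{n+1}$ is obtained from $Y_n$ by attaching a new path $P_n$ of length $\ell_n<\pi$ between two points $p_n,q_n$ of $Y_n$. These points must satisfy $d_{Y_n}(p_n,q_n)\ge 2\pi-\ell_n$, so that every new cycle has length $\ge 2\pi$ and the girth bound persists. Then $d_{Y_{n+1}}(p_n,q_n)=\ell_n<\pi$, so the closure property puts $P_n$ into $D$ with
\[
r\ge \min\{r(p_n),r(q_n)\}\cos(\ell_n/2)
\]
on $P_n$. Since the new path only shortens distances, the induction needs $D$ and $r$ to be computed in the final space $Y$. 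This works, because hull points already obtained in the cone over $Y_n$ are geodesics of $C(Y_n)$, and these remain geodesics in $C(Y)$ as long as their projections remain $Y$-geodesics. Checking this last point is part of what the girth condition should guarantee, and I would verify it carefully.

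Unboundedness and properness come from arranging the attachments as an outward-growing, locally finite tower, so that $\sum\ell_n=\infty$ along some chain going to infinity.

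The main obstacle is keeping $\inf r>0$. Each generation costs a factor $\cos(\ell_n/2)\approx 1-\ell_n^2/8$. My first attempt is to take the lengths $\ell_n\to 0$ with $\sum\ell_n^2<\infty$ but $\sum\ell_n=\infty$, and to choose each pair $p_n,q_n$ from the most recent layer, with $r(p_n),r(q_n)$ controlled by the product $\prod_{k<n}\cos(\ell_k/2)$. This product is bounded below by a positive constant, which gives the uniform neighborhood of $o$.

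The delicate point is compatibility with the girth requirement. The endpoints $p_n,q_n$ must be at distance $\ge 2\pi-\ell_n$, close to $2\pi$, yet both must lie on the newest, high-radius part of $D$. This forces each layer to contain long arcs of its own. I expect to need to attach whole cycles of length $2\pi$ along short arcs rather than single paths, or several paths per step. The bookkeeping of which pairs are far apart is where the construction must be designed with care.
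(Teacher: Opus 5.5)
Your cone analysis is sound and matches Proposition~\ref{prop:ball}: the chord bound $\min(s,t)\cos(d/2)$, star-shapedness about $o$, and a positive cosine product under $\sum\ell_n^2<\infty$. The proof breaks in the graph, and it breaks first at the seed. A cycle $C_0$ of length exactly $2\pi$ is isometrically embedded and $\pi$-convex in every graph $Y\supset C_0$ of girth at least $2\pi$. Indeed, a geodesic of length $<\pi$ between points of $C_0$ that leaves $C_0$ contains an excursion of length $<\pi$. Together with the shorter arc of $C_0$ between its endpoints, this excursion forms a simple cycle shorter than $2\pi$. Hence $\pconv^Y(A)=C_0$, your direction set $D$ never leaves $C_0$, and $\conv(\widehat A)$ is just the compact triangle in the flat $C(C_0)$, whatever you attach. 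Concretely, your very first attachment is impossible, since all distances in $C_0$ are at most $\pi<2\pi-\ell_0$. The seed must contain points more than $\pi$ apart that are still generated by steps shorter than $\pi$. The paper takes a circle of length $L\in(2\pi,3\pi)$ with $A$ spaced $L/3<\pi$ apart. Two rounds of matchings (Proposition~\ref{prop:seed}) then yield $N$ degree-two vertices pairwise more than $2\pi$ apart. The paper obtains $o\in\conv(\widehat A)$ only at the end, from $D=Y$ and the unboundedness of $Y$.

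Second, the step you defer as ``delicate'' is the real content of the theorem. You give no mechanism ensuring that pairs at distance at least $2\pi-\ell_n$ keep existing in the newest layer forever. Gluing $2\pi$-cycles along short arcs would add paths longer than $\pi$, to which your closure property does not apply. In the paper, each generation $V_n$ consists of degree-two midpoints. Intervals of length $2\alpha_n$ are attached greedily, one at a time, between points of $V_n$ whose current distance is at least $2\pi-2\alpha_n$, with degrees capped at six; their midpoints form $V_{n+1}$. Lemma~\ref{lem:growth} bounds the unsaturated set $U_n$: its points are pairwise less than $2\pi$ apart in $G_{n+1}$, minimizing paths between them avoid $G_0$, and all edges added after $G_0$ have length at least $\alpha_n$. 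Hence $|U_n|\le 6^{1+2\pi/\alpha_n}$, while $k_{n+1}\ge 2(k_n-|U_n|)$. This needs a condition missing from your list, namely $1/\alpha_n=o(n)$, so that $6^{2\pi/\alpha_n}$ is subexponential. With $\alpha_n=(n+1)^{-2/3}$ and $N$ large, one gets $k_n\ge N(3/2)^n$, so the process never stops. The same layering gives properness, which requires every path to infinity to have infinite length, not just some chain: a path from $G_n$ out of $G_{n+k}$ crosses half-intervals of lengths $\alpha_n,\dots,\alpha_{n+k-1}$. Radius is not what forces attaching at the newest layer, since all of $G_n$ already has radius at least $r_{n+4}$; properness and the supply of far-apart pairs are.
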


A few comments are in order.
\begin{itemize}
\item The \emph{girth} of a metric graph is the infimum of the lengths of
its simple closed curves. The condition that the girth be at least $2\pi$
is equivalent to the CAT(1) condition on $Y$, which is in turn equivalent
to the CAT(0) condition on its Euclidean cone; see
\cite[Chapter~II.5 and Theorem~II.3.14]{BH99}.

\item \emph{Properness} in Theorem~\ref{thm:main} means that all closed
bounded balls are compact. In our construction, every closed bounded ball
meets only finitely many edges.

\item In the constructed graph, all vertex degrees lie between two and six.

\item The Euclidean cone $X$ is complete and geodesically complete;
compare \cite[Section~4.2]{LN19}. It is locally compact away from $o$,
whereas no neighborhood of $o$ is compact.

\item The CAT(0) space $X$ has Hausdorff and topological dimension two.
Away from the tip $o$, it is locally bilipschitz to the product of a
locally finite metric graph and the real line.

\item Already three points suffice to produce a non-compact closed
convex hull. The three-point case was explicitly mentioned in
\cite[Section~9.O]{AKP24}, \cite[Section~1]{BKS24},
\cite[Section~12]{Pet25}, and \cite{Goo26}.

\item The unbounded CAT(1) graph $Y$ has the counterintuitive property
that it is the $\pi$-convex hull of a three-point subset $A$. Thus the
convex hull of $A$ in $X$ contains an infinitesimal neighborhood of the
tip $o$. This is reminiscent of the main result of \cite{LP22}.

\item To ensure that the convex hull of $A$ contains an actual
neighborhood of $o$, rather than only an infinitesimal one, we need
precise control of the lengths of the edges that generate the convex
hull $Y$ step by step. This central idea of \cite{Goo26} is reminiscent
of \cite{Mon16}.
\end{itemize}

\subsection*{Acknowledgements}
The author would like to thank Anton Petrunin for many conversations about
this topic over the years.
The author was supported in part by the German Research Foundation under
Germany's Excellence Strategy -- EXC-2047/1 -- 390685813.
This paper arose in close collaboration with ChatGPT, an AI assistant
developed by OpenAI. The collaboration involved exploring proof strategies,
refining arguments, locating references, and drafting and revising the
manuscript. The author takes full responsibility for the mathematical
content and the final text.

\section{Convex hulls}\label{sec:hulls}

\subsection{Notation}
In a metric space, we denote by $\clB_r(x)$ the closed ball of radius $r$
centered at $x$.
For $s>0$, a subset is \emph{$s$-separated} if its distinct points have
pairwise distances at least~$s$.

A graph will mean a connected, locally finite graph with edges of positive
length, endowed with the associated path metric. Vertices of
degree two are allowed. All graphs in the construction have finitely many
edges, except for the final increasing union, whose properness is proved
below. All CAT(0) and CAT(1) spaces are complete by convention; a CAT(0)
space is also called a Hadamard space.

All graphs considered below are geodesic spaces, by finiteness at the
finite stages and properness at the final step. In a graph $G$ of girth at least $2\pi$,
points $x,y$ with $d_G(x,y)<\pi$ are joined by a unique geodesic,
denoted by $xy$; we set $xx=\{x\}$.

A subset $H\subset G$ is \emph{$\pi$-convex} if it contains $xy$ whenever
$x,y\in H$ and $d_G(x,y)<\pi$. Equivalently, these points can be joined
inside $H$ by a curve realizing their distance in $G$. The
\emph{$\pi$-convex hull} $\pconv^G(A)$ is the smallest $\pi$-convex subset
containing $A$. It is obtained by iteration:
\begin{equation}\label{eq:pi-hull-iteration}
 A_1=A,\qquad
 A_{i+1}=A_i\cup
 \bigcup_{\substack{x,y\in A_i\\ d_G(x,y)<\pi}}xy,
 \qquad
 \pconv^G(A)=\bigcup_{i=1}^{\infty}A_i.
\end{equation}
We use the same definition and notation in any CAT(1) space.

In a CAT(0) space $X$, all pairs of points are joined by unique geodesics.
The convex hull is given by the analogous construction without a length
restriction:
\begin{equation}\label{eq:hull-iteration}
 A_1=A,\qquad A_{i+1}=A_i\cup\bigcup_{x,y\in A_i}xy,
 \qquad \conv^X(A)=\bigcup_{i=1}^{\infty}A_i.
\end{equation}
The closure of $\conv^X(A)$ is the \emph{closed convex hull} of $A$,
denoted by $\cconv^X(A)$.

\subsection{The graph to be constructed}
The following statement isolates the graph construction and the length
control needed for a uniform neighborhood in the cone.

\begin{proposition}\label{prop:graph}
There exist a proper, unbounded metric graph $Y$ of girth at least $2\pi$,
a three-point set $A\subset Y$, and a sequence $(\beta_i)_{i\geq1}$ such that
\begin{equation}\label{eq:beta-summability}
 0<\beta_i<\frac{\pi}{2},\qquad
 \sum_{i=1}^{\infty}\beta_i^2<\infty.
\end{equation}
The sets defined by
\begin{equation}\label{eq:controlled-hulls}
 A_1=A,\qquad
 A_{i+1}=A_i\cup
 \bigcup_{\substack{x,y\in A_i\\ d_Y(x,y)\leq2\beta_i}}xy
\end{equation}
exhaust the graph:
\begin{equation}\label{eq:controlled-exhaustion}
 Y=\bigcup_{i=1}^{\infty}A_i.
\end{equation}
In particular, $\pconv^Y(A)=Y$.
\end{proposition}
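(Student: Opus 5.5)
The plan is to build $Y$ as an increasing union of finite graphs $G_1\subset G_2\subset\cdots$, where $G_{i+1}$ is obtained from $G_i$ by attaching finitely many new edges whose endpoints lie in $G_i$. I would arrange the induction so that $A_i=G_i$ for every $i$, where $A_i$ is computed in the final graph $Y$; this gives \eqref{eq:controlled-exhaustion} automatically. I take $\beta_i=c/i$ for a small constant $c>0$. Then $\beta_i<\pi/2$ and $\sum\beta_i^2<\infty$, while $\sum\beta_i=\infty$; the divergence is what leaves room for $Y$ to be unbounded.

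\textbf{Start and gluing rule.} Let $G_1$ be a star: a centre joined to the three points of $A$ by edges of length $\beta_1$. The three points are pairwise at distance $2\beta_1$, so $A_2\supseteq G_1$, and in general $A_{i+1}\supseteq A_i$.

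At stage $i$, I add the new edges one at a time. Each new edge $e$ has length $\ell\le 2\beta_i$ and joins two points $x,y$ of the current graph with $d(x,y)\ge 2\pi-\ell$. Every simple cycle through $e$ then has length at least $\ell+(2\pi-\ell)=2\pi$, so girth at least $2\pi$ is preserved at each step. It therefore holds for $Y$, since every simple closed curve in $Y$ lies in some finite $G_k$.

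\textbf{Stability of geodesics.} Once a pair $x,y$ with $d(x,y)\le 2\beta_i<\pi$ is joined by the edge $e$, that edge stays the unique geodesic between them in every later graph. Indeed, any competing path $\gamma$ forms a closed curve with $e$, and this curve contains a simple cycle through $e$ (or $\gamma$ contains $e$). The girth bound gives $\operatorname{length}(\gamma)\ge 2\pi-2\beta_i>2\beta_i$. Hence in $Y$ we have $x,y\in A_i$, $d_Y(x,y)\le2\beta_i$, and $xy=e$, so $e\subset A_{i+1}$.

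I also need the endpoint distances computed in $Y$ to stay $\le 2\beta_i$. Later edges only shorten distances, so upper bounds survive. Every edge of $G_{i+1}$ arises this way, which yields $A_{i+1}\supseteq G_{i+1}$ by induction. The reverse inclusion $A_{i+1}\subseteq G_{i+1}$ says that no geodesic of length $\le 2\beta_i$ between points of $G_i$ leaves $G_{i+1}$. It follows from the same girth argument, as long as later edges have endpoints at old distance $\ge 2\pi-2\beta_j$ and so never create short shortcuts. In fact $A_i\supseteq G_i$ is all that exhaustion requires.

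\textbf{Main obstacle: combinatorial design.} The hard step is the design itself. At each stage I must find pairs of points of $G_i$ at mutual distance about $2\pi$ that lie far from the base point, so that three things hold:
\begin{itemize}
\item each new edge reaches new territory (its midpoint gains up to $\beta_i$ in distance from $G_1$);
\item the edges added at stage $i$ leave every fixed ball for large $i$, which gives properness, together with finitely many edges per stage;
\item degrees stay bounded.
\end{itemize}
My first attempt would maintain a "frontier" $F_i$: a long path, or a tree, of points at roughly maximal distance from the centre. I would glue short edges between frontier points that are about $2\pi$ apart along $F_i$, and take the new frontier to be the union of the midpoints of these edges, suitably subdivided.

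The quantitative issue is to keep the frontier length growing, so that pairs at distance $\approx 2\pi$ continue to exist. At the same time the frontier must move outward by $\approx\beta_i$ per stage without closing short cycles. The early stages, while the diameter of $G_i$ is still below $2\pi$, would be handled by first letting $G_i$ grow as a tree using a fixed number of stages with $\beta_i$ of constant size. Since $\sum\beta_i=\infty$, the frontier drifts to infinity, and this makes $Y$ unbounded and proper.
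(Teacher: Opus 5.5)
Your architecture matches the paper's: an increasing union of finite graphs, edges of length at most $2\beta_i$ attached one at a time between points at current distance at least $2\pi-\ell$, stability of short geodesics, and a frontier of newest midpoints pushed outward by $\sum\beta_i=\infty$. But the proposal fails at the very first step.

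\textbf{The star start cannot grow.} With $G_1$ a star of legs $\beta_1<\pi/2$, we have $\operatorname{diam}G_1=2\beta_1<\pi<2\pi-2\beta_2$. So your own gluing rule admits no edge at stage $2$, and $G_i=G_1$ for all $i$. No cleverer design can fix this. In any graph of girth at least $2\pi$ containing this star, the star path between two of its points is a simple path of length less than $\pi$, hence the unique geodesic. So the star is $\pi$-convex, and $\pconv^Y(A)$ is just the star.

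Your remedy of first letting $G_i$ ``grow as a tree'' is impossible for the same reason. If a geodesic between two points of a connected hull leaves the hull, its outside arc together with a path inside the hull contains a simple cycle. That cycle must have length at least $2\pi$. So a connected hull only ever grows by closing long cycles. The three generating steps must therefore already produce a cycle of length at least $2\pi$, and a geodesic triangle of perimeter less than $2\pi$ is a tripod. Hence the initial $\beta_i$ cannot be small.

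What is missing is a seed, Proposition~\ref{prop:seed}:
\begin{enumerate}
\item Take a circle of length $L\in(2\pi,3\pi)$ with the three points of $A$ equally spaced on it. Its arcs have length $L/3<\ell<\pi$, and $\beta_1=\beta_2=\beta_3=\ell/2>\pi/3$, not $c/i$ with $c$ small.
\item Add two layers of matchings via Lemma~\ref{lem:matching}. This supplies $N$ degree-two vertices pairwise more than $2\pi$ apart, with $N$ as large as needed.
\end{enumerate}

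\textbf{No growth argument.} The second gap is the one you flag yourself. You give no argument that, despite the shortcuts every new edge creates, the frontier keeps containing pairs at distance at least $2\pi-2\beta_i$ in sufficient number. The paper settles this with a greedy attachment capped at degree six and a count, Lemma~\ref{lem:growth}:
\begin{enumerate}
\item When stage $n$ stops, the frontier vertices that received fewer than four new intervals are pairwise closer than $2\pi$.
\item Minimizing paths between them avoid $G_0$, because of the separation built into the seed.
\item Every new edge has length at least $\alpha_n$.
\end{enumerate}
Hence there are at most $6^{1+2\pi/\alpha_n}$ such unsaturated vertices. Every other frontier vertex carries four new intervals, so $k_{n+1}\ge 2|V_n\setminus U_n|\ge\frac32k_n$ once $N$ is large. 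This needs $6^{2\pi/\alpha_n}$ to be subexponential in $n$, which is why the paper takes $\alpha_n=(n+1)^{-2/3}$.

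With your $\beta_i=c/i$ and $c$ small, the bound is exponential with base $6^{2\pi/c}$. Under the degree cap a generation grows by at most a factor $2$, so the bound eventually exceeds $k_n$, becomes vacuous, and the counting does not close. It does close when, for instance, $1/\beta_i=o(i)$.
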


The proof is given in Section~\ref{sec:construction}. Here we record an
elementary lemma.

\begin{lemma}\label{lem:cos-product}
If $(\beta_i)_{i\geq1}$ satisfies \eqref{eq:beta-summability}, then
\begin{equation}\label{eq:cos-product}
 r_*:=\prod_{i=1}^{\infty}\cos\beta_i>0.
\end{equation}
\end{lemma}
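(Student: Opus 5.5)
The plan is to pass to logarithms and compare $-\log\cos\beta$ with $\beta^2$ on the interval $(0,\pi/2)$. Each factor satisfies $0<\cos\beta_i<1$ because $0<\beta_i<\pi/2$. So the partial products $P_n=\prod_{i=1}^n\cos\beta_i$ are positive and decreasing, and the infinite product exists as their limit in $[0,1)$. It therefore suffices to show that the series $\sum_{i}(-\log\cos\beta_i)$ of positive terms converges. Then $\log P_n$ is bounded below, and $r_*=\lim P_n=\exp\bigl(\sum_i\log\cos\beta_i\bigr)>0$.

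The key step is a pointwise comparison: $-\log\cos\beta\le C\beta^2$ for all $\beta\in(0,\pi/2)$ with a uniform constant $C$. Such a bound cannot hold on the whole interval, since $-\log\cos\beta\to\infty$ as $\beta\to\pi/2$. The remedy is the summability hypothesis \eqref{eq:beta-summability}. It gives $\beta_i\to0$, so there is $N$ with $\beta_i\le\pi/4$ for all $i>N$. The finitely many factors with $i\le N$ are each strictly positive, so they only multiply the limit by a positive constant. They can therefore be ignored.

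For $\beta\in[0,\pi/4]$, I will use $1-\cos\beta\le\beta^2/2$ together with $-\log(1-t)\le 2t$ for $t\in[0,1/2]$. The latter follows from the concavity of $\log$, or from the series $\sum t^k/k\le t\sum t^{k-1}\le 2t$. Since $\cos\beta\ge\cos(\pi/4)>1/2$, we have $t=1-\cos\beta\in[0,1/2]$. Together these give $-\log\cos\beta\le\beta^2$.

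Hence $\sum_{i>N}(-\log\cos\beta_i)\le\sum_{i>N}\beta_i^2<\infty$, so $\prod_{i>N}\cos\beta_i\ge\exp\bigl(-\sum_{i>N}\beta_i^2\bigr)>0$, and multiplying by $\prod_{i\le N}\cos\beta_i>0$ gives $r_*>0$. The only point needing care is the tail truncation that avoids the blow-up of $\log\cos$ near $\pi/2$. With it, the argument is routine and I expect no real obstacle.
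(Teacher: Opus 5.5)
Your proposal is correct and follows essentially the same route as the paper: you pass to the series $\sum_i(-\log\cos\beta_i)$, use $\beta_i\to0$ to restrict to a tail, and there bound $-\log\cos\beta_i$ by a constant multiple of $\beta_i^2$. The only difference is that you derive the small-angle estimate explicitly ($-\log\cos\beta\le\beta^2$ on $[0,\pi/4]$), whereas the paper appeals to Taylor expansion to get $-\log\cos\beta_i<2\beta_i^2$ for all sufficiently large $i$.
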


\begin{proof}
The infinite product converges to a positive number if
\[
 \sum_{i=1}^{\infty}|\log(\cos\beta_i)|<\infty.
\]
Since $\beta_i\to0$, Taylor expansion at zero gives
\[
 0<-\log(\cos\beta_i)<2\beta_i^2
\]
for all sufficiently large $i$. The assumed summability of
$\sum_i\beta_i^2$ proves the claim.
\end{proof}

\subsection{Passage to the Euclidean cone}\label{sec:cone}
In the Euclidean cone $X=CY$, we denote the tip by $o$ and write $(r,y)$
for the point at distance $r>0$ from $o$ in direction $y\in Y$; see
\cite[Chapter~I.5]{BH99}. If $Y$ is CAT(1), then $X$ is CAT(0) by
\cite[Theorem~II.3.14]{BH99}.

\begin{proposition}\label{prop:ball}
Let $Y$ be an unbounded CAT(1) space, and let $A\subset Y$, the sequence
$(\beta_i)$, and the sets $A_i\subset Y$ satisfy
\eqref{eq:beta-summability}--\eqref{eq:controlled-exhaustion}.
In $X=CY$, the set
$\widehat A=\{(1,a):a\in A\}$ satisfies
\begin{equation}\label{eq:ball}
 \clB_{r_*}(o)\subset\conv^X(\widehat A)\subset\clB_1(o),
\end{equation}
with $r_*>0$ as in \eqref{eq:cos-product}.

In particular, $\cconv^X(\widehat A)$ is not compact.
\end{proposition}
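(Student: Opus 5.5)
We prove the two inclusions in \eqref{eq:ball} separately and then deduce non-compactness.

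The upper inclusion $\conv^X(\widehat A)\subset\clB_1(o)$ holds because closed balls in a CAT(0) space are convex: the distance function to $o$ is convex along geodesics. Since $\widehat A\subset\clB_1(o)$, every step of the iteration \eqref{eq:hull-iteration} stays inside $\clB_1(o)$.

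For the lower inclusion we track radii along the iteration \eqref{eq:controlled-hulls}. Set $\rho_1=1$ and $\rho_{i+1}=\rho_i\cos\beta_i$, so that $\rho_i\downarrow r_*$ by Lemma~\ref{lem:cos-product}. The claim, proved by induction on $i$, is that
\[
 (t,y)\in\conv^X(\widehat A)\quad\text{for all } y\in A_i \text{ and } t\in[0,\rho_i],
\]
with the convention that $(0,y)=o$. For $i=1$ the point $(1,a)$ lies in the hull for each $a\in A$. The tip $o$ also lies in the hull: $Y$ is unbounded and $\pconv^Y(A)=Y$, so $A$ has two points, or points produced at some stage, at distance $\ge\pi$, and the cone geodesic between them passes through $o$. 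Alternatively one can reach $o$ after the induction. Once $o$ is in the hull, radial segments $[o,(1,a)]$ are in the hull, which gives the whole radial segment for $i=1$.

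For the inductive step, take $x,y\in A_i$ with $d_Y(x,y)=\theta\le2\beta_i<\pi$, and a point $z\in xy$. We must show $(t,z)$ lies in the hull for $t\le\rho_{i+1}$. Consider the cone geodesic between $(\rho_i,x)$ and $(\rho_i,y)$. It lies in the flat Euclidean sector over the segment $xy$, which is isometric to a planar sector of angle $\theta$, and it is the chord of that sector. The chord passes through direction $z$ at radius at least $\rho_i\cos(\theta/2)\ge\rho_i\cos\beta_i=\rho_{i+1}$. Together with $o$, the hull therefore contains the point at radius $\rho_{i+1}$ in direction $z$, and hence the radial segment below it. Points of $A_i$ themselves keep their radial segments because $\rho_{i+1}\le\rho_i$.

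By \eqref{eq:controlled-exhaustion}, every direction $y\in Y$ lies in some $A_i$. Hence $(t,y)$ lies in the hull for all $t\le\rho_i$, and in particular for all $t\le r_*$, which gives $\clB_{r_*}(o)\subset\conv^X(\widehat A)$. The main point needing care is getting $o$ into the hull; we obtain it from the existence of hull points in $Y$ at distance $\ge\pi$, which $Y$ provides since it is unbounded and equals $\bigcup A_i$, with only finitely many stages needed to produce such a pair.

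Finally, the closed convex hull contains $\clB_{r_*}(o)$. This ball contains the points $(r_*,y)$ for an unbounded set of $y\in Y$. For $d_Y(y,y')\ge\pi$ these points are at distance $2r_*$ from each other, so one can choose infinitely many of them pairwise $2r_*$-separated. The closed convex hull is therefore not totally bounded, and hence not compact.
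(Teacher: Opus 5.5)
Your proof follows the paper's route: the same radii $\rho_{i+1}=\rho_i\cos\beta_i$, the same chord estimate $\rho_i\cos(\theta/2)\ge\rho_{i+1}$ in the flat sector over $xy$, the same convexity-of-balls argument for the upper inclusion, and the same $2r_*$-separated family for non-compactness. The one step that does not close as written is getting $o$ into the hull.

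Your base case and your inductive step both use $o$, to contract radially down to exactly $\rho_i$. Your justification of $o\in\conv^X(\widehat A)$ needs positive-radius hull points in two directions at distance $\ge\pi$. Such directions in general only appear in some later $A_{i_0}$; in the paper's construction all three points of $A$ are at mutual distance $<\pi$. Supplying hull points over $A_{i_0}$ is exactly what the induction is meant to do. So ``reaching $o$ after the induction'' is circular unless the induction is run without $o$.

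The gap is easy to fill, and the paper fills it first. Let $D$ be the set of $y\in Y$ with $(r,y)\in\conv^X(\widehat A)$ for some $r>0$. If $y,z\in D$ and $d_Y(y,z)<\pi$, the chord between positive-radius points in these directions lies in the planar sector over $yz$. It misses $o$ and projects radially onto all of $yz$. Hence $D$ is $\pi$-convex, so $D\supset\pconv^Y(A)=Y$. Unboundedness then gives $y,z\in D$ with $d_Y(y,z)\ge\pi$, and their chord passes through $o$.

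Alternatively, you can keep your ordering by weakening the inductive claim to ``$(t,y)\in\conv^X(\widehat A)$ for some $t\ge\rho_i$''. For this, use that the chord between $(r,x)$ and $(s,y)$ stays at distance at least $\min(r,s)\cos(\theta/2)$ from $o$; project onto the bisecting ray of the sector to see this. Then $o$ and the radial contraction come at the end.
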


\begin{proof}
Set $C=\conv^X(\widehat A)$ and consider the set of directions represented
by $C$:
\[
 D=\{y\in Y:(r,y)\in C\text{ for some }r>0\}.
\]
This set is $\pi$-convex. Indeed, for distinct $y,z\in D$ with
$d_Y(y,z)<\pi$, choose positive-radius points of $C$ in these directions.
Their geodesic is a chord in the Euclidean sector over $yz$; it avoids
$o$, and its radial projection to $Y$ has image $yz$. Thus $yz\subset D$.
Since $A\subset D$ and $\pconv^Y(A)=Y$, we infer $D=Y$.

As $Y$ is unbounded, choose $y,z\in Y$ with $d_Y(y,z)\geq\pi$.
By $D=Y$, there exist $r,s>0$ such that $(r,y),(s,z)\in C$.
The geodesic between these two cone points passes through $o$, so $o\in C$.
Consequently, radial contraction towards $o$ preserves $C$.

Define the successive radii by
\begin{equation}\label{eq:radii}
 r_1=1,\qquad r_{i+1}=r_i\cos\beta_i.
\end{equation}
We claim that $(r_i,x)\in C$ for every $x\in A_i$. The case $i=1$
holds because $r_1=1$ and $\{(1,a):a\in A_1\}=\widehat A\subset C$.

For the induction step, let $x\in A_{i+1}$. There exist $y,z\in A_i$
with $0\leq\theta=d_Y(y,z)\leq2\beta_i<\pi$ and $x\in yz$;
when $x\in A_i$, we may take $y=z=x$.
By induction, $(r_i,y),(r_i,z)\in C$. In the possibly degenerate Euclidean
sector over $yz$,
the chord between these points meets every direction of $yz$ and has
minimum distance
\[
 r_i\cos(\theta/2)\geq r_i\cos\beta_i=r_{i+1}
\]
from $o$. Thus $(t,x)\in C$ for some $t\geq r_{i+1}$. Contracting
radially yields $(r_{i+1},x)\in C$, completing the induction.

By Lemma~\ref{lem:cos-product}, the radii $r_i$ decrease to $r_*>0$.
For every $x\in Y$, the exhaustion provides an index $i$ with $x\in A_i$.
Since $(r_i,x)\in C$ and $o\in C$, the convex hull $C$ contains every
point $(t,x)$
with $0<t\leq r_*$. This proves the first inclusion in \eqref{eq:ball};
the second follows from convexity of closed balls in a CAT(0) space.

Finally, unboundedness of $Y$ supplies points $y_j$ with
$d_Y(y_i,y_j)\geq\pi$ for $i\neq j$.
Then the points $(r_*,y_j)\in C$ have pairwise distance exactly
$2r_*$. Hence $C$ is not totally bounded, so its closure cannot be compact.
\end{proof}

\begin{proof}[Proof of Theorem~\ref{thm:main}, assuming Proposition~\ref{prop:graph}]
A proper metric graph of girth at least $2\pi$ is CAT(1), cf.
\cite[Proposition~6.8(2)]{Bal04}.
Thus the graph $Y$ in Proposition~\ref{prop:graph} is CAT(1), and its
Euclidean cone $X=CY$ is CAT(0). Proposition~\ref{prop:ball} gives the asserted
neighborhood inclusion and non-compactness.
\end{proof}

\section{Construction of the graph}\label{sec:construction}

\subsection{Adding a matching}
We begin with an elementary observation.

\begin{lemma}\label{lem:matching}
Let $G$ be a finite graph of girth at least $2\pi$, and let $W\subset G$
be a finite $s$-separated set of even cardinality. Pair the points of $W$,
and attach an interval of length $0<\ell<\pi$ between the points of each
pair, with interiors disjoint from $G$ and from one another. Suppose that
\[
 s+\ell\geq\pi,\qquad
 d_G(u,v)\geq2\pi-\ell
 \quad\text{for every paired pair }u,v\in W.
\]
Then the enlarged graph has girth at least $2\pi$. The midpoints of the
new intervals form an $(s+\ell)$-separated set and have degree two.
\end{lemma}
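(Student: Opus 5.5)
We have to show two things: every simple closed curve in the enlarged graph $G'$ has length at least $2\pi$, and the midpoints of the new intervals are $(s+\ell)$-separated (and have degree two). We classify a simple closed curve $\gamma$ in $G'$ by the number $k$ of new intervals it traverses. Since the interior points of a new interval have degree two in $G'$, a simple closed curve that enters the interior of a new interval must traverse it entirely. The same observation gives the degree-two claim for the midpoints at once.

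If $k=0$, then $\gamma$ lies in $G$, and its length is at least $2\pi$ by the girth assumption on $G$.

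If $k=1$, $\gamma$ consists of one new interval $I$ joining a paired pair $u,v$, together with an arc of $G$ from $v$ back to $u$. That arc has length at least $d_G(u,v)\geq 2\pi-\ell$, so $\gamma$ has length at least $\ell+(2\pi-\ell)=2\pi$.

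If $k\geq2$, $\gamma$ traverses at least two new intervals, contributing total length $k\ell$. In between consecutive new intervals, $\gamma$ follows arcs in $G$. Each such arc runs from an endpoint of one new interval to an endpoint of the next. Because the intervals are attached along a matching, every point of $W$ is the endpoint of exactly one new interval. Hence the two endpoints of each connecting arc are distinct points of $W$: if they coincided, $\gamma$ would pass twice through that point, contradicting simplicity. So each connecting arc has length at least $s$. There are $k$ connecting arcs, so the length of $\gamma$ is at least $k(\ell+s)\geq 2(s+\ell)\geq 2\pi$.

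Separation of the midpoints: let $m,m'$ be the midpoints of distinct new intervals $I,I'$. Any path from $m$ to $m'$ must leave $I$ through an endpoint $w\in W$, which costs $\ell/2$. It must also enter $I'$ through an endpoint $w'\in W$, costing another $\ell/2$. The points $w$ and $w'$ are distinct because the pairing is a matching. In between, the path travels from $w$ to $w'$ in $G'$. Such a path either stays in $G$, where it has length at least $d_G(w,w')\geq s$, or it traverses further whole new intervals, each of length $\ell$ and with endpoints in $W$. Inductively, any path in $G'$ between distinct points of $W$ has length at least $\min(s,\ell)$. Getting the full bound $s$ needs a little more care:
\begin{itemize}
\item If the path passes through some new interval, it has length at least $\ell$ plus whatever it needs to reach and leave that interval.
\item The one case to watch is a path that goes from $w$ directly across a new interval. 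That interval would have to be $I$ itself, since $w$ belongs to exactly one new interval. A path realizing the distance would then backtrack, which it need not do.
\end{itemize}
So a shortest path from $w$ to $w'$ avoids $I$ and $I'$. If it uses some other new interval, its length is at least $s+\ell\geq s$, since it must first reach a point of $W$ distinct from $w$. Either way $d(m,m')\geq \ell+s$.

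The step I expect to be the main obstacle is this last one: showing that shortcuts through other new intervals never beat $s$. The clean formulation is that distinct points of $W$ stay at distance at least $\min(s,\,s+\ell)=s$ in $G'$. This holds because every path between distinct points of $W$ decomposes into $G$-segments between distinct points of $W$, each of length at least $s$, together with new intervals.
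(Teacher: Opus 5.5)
Your proof is correct and follows the paper's route: you split simple cycles by the number of new intervals they traverse, with the matching forcing every connecting $G$-arc to join distinct points of $W$. For the midpoints, you use that a minimizing path consists of two half-intervals plus, since it cannot re-enter $I$ or $I'$, at least one $G$-path between distinct points of $W$.

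One correction: the ``clean formulation'' in your last paragraph is false as stated. Partners $u,v\in W$ are joined by a new interval of length $\ell$, which may be shorter than $s$ (as in the paper's iteration, where $\ell=2\alpha_n$ and $s=2\pi-2\alpha_n$), and your decomposition argument misses paths consisting of a single new interval. So the bound $\geq s$ only holds for non-partnered points, or for paths avoiding the new intervals at their endpoints, which is exactly the case your preceding paragraph handles.
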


\begin{proof}
Consider a simple cycle containing new intervals. If it contains exactly
one new interval, its length is at least $\ell+(2\pi-\ell)=2\pi$.
If it contains at least two such intervals, the matching condition forces
it to contain at least two paths in $G$ between distinct points of $W$.
Hence the cycle has length at least $2\ell+2s\geq2\pi$.

A minimizing path between distinct new midpoints traverses their two
half-intervals and at least one old path between distinct points of $W$.
Its length is therefore at least $\ell/2+s+\ell/2=s+\ell$.
The assertion about degrees follows from the construction.
\end{proof}

\subsection{A finite graph generated by three points}
Using Lemma~\ref{lem:matching}, we can find CAT(1) graphs in which the
$\pi$-convex hull of three points contains arbitrarily large separated sets.

\begin{proposition}\label{prop:seed}
For every integer $N\geq1$, there exist a finite graph $H$, a three-point
set $A\subset H$, and a set $V\subset H$ of exactly $N$ vertices such that
\begin{enumerate}
\item $H$ has girth at least $2\pi$;
\item every vertex $v\in V$ has degree two, and
$d_H(v,w)>2\pi$ for distinct $v,w\in V$;
\item for some $0<\ell<\pi$, the sequence defined by
\begin{equation}\label{eq:seed-generation}
 A_1=A,\qquad
 A_{i+1}=\bigcup_{\substack{x,y\in A_i\\d_H(x,y)\leq\ell}}xy
 \quad(i\geq1)
\end{equation}
satisfies $A_4=H$.
\end{enumerate}
\end{proposition}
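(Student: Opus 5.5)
The plan is to construct $H$ by hand in three layers, one for each step of \eqref{eq:seed-generation}. Only geodesics of length at most $\ell<\pi$ are ever used. Fix $\ell$ close to $\pi$; it will be chosen in terms of $N$. Take $A=\{a,b,c\}$ with $d_H(a,b)=\ell$ and $c$ at distance larger than $\ell$ from $a$ and $b$. Then $A_2=ab\cup\{c\}$, where $ab$ is a segment of length $\ell$.

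The obstacle to overcome is that hulls of tree-like sets of diameter less than $\pi$ do not grow. Every cycle has length at least $2\pi$, so geodesics between points of a small tree stay inside the tree. Growth has to come from the one isolated point $c$. Geodesics from $c$ to different points of $ab$ may leave $c$ along different edges. So the second layer is built as follows.
\begin{itemize}
\item Choose points $p_1,\dots,p_k$ on $ab$ with consecutive spacing $\sigma$.
\item Join $c$ to each $p_j$ by its own path $L_j$ of length $\lambda\le\ell$. The paths $L_j$ meet only at $c$.
\item The triangle $c\,p_j\,p_k$ has length $2\lambda+d(p_j,p_k)\ge2\lambda+\sigma$. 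Girth at least $2\pi$ therefore holds once $2\lambda+\sigma\ge2\pi$.
\end{itemize}
With $\lambda=\ell$ close to $\pi$, we may take $\sigma=2\pi-2\ell$, which is tiny. Hence $k\approx\ell/(2\pi-2\ell)$ can be made as large as we like. We must also check that $L_j$ is the unique geodesic from $c$ to $p_j$; this is a direct length comparison. Granting it, $A_3=ab\cup\bigcup_jL_j$.

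The third layer uses Lemma~\ref{lem:matching}, applied to the graph $G=ab\cup\bigcup_jL_j$. Choose one point $w_j$ on each leg $L_j$, for instance at a fixed distance from $c$, so that $W=\{w_j\}$ is $s$-separated in $G$. Pair the points of $W$ and attach intervals of length $\ell'\le\ell$ between paired points. Lemma~\ref{lem:matching} then gives three things:
\begin{enumerate}
\item girth at least $2\pi$, provided $s+\ell'\ge\pi$ and paired points satisfy $d_G\ge2\pi-\ell'$;
\item degree-two midpoints that are $(s+\ell')$-separated;
\item each new interval is the unique geodesic between its endpoints, because the competing path has length at least $2\pi-\ell'>\ell'$.
\end{enumerate}
Since $w_j,w_k\in A_3$ are at distance $\ell'\le\ell$, the intervals lie in $A_4$. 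Also $A_4\supset A_3$, because every point of $A_3$ is the degenerate geodesic $xx$. So $A_4=H$, provided no further geodesics between points of $A_3$ leave this graph; but $H$ contains nothing else. We take $V$ to be the set of new midpoints, declared vertices, which requires $N\le k/2$.

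The main difficulty is the separation requirement in (2), namely $d_H(v,w)>2\pi$ for $v,w\in V$. This needs $s+\ell'>2\pi$, so $s>2\pi-\ell'\ge\pi$. The distances between points $w_j$ on different legs are only $d(w_j,c)+d(w_k,c)$, or the route through $ab$, and both are at most about $2\ell$. To get $s$ slightly above $2\pi-\ell'$, I would first try putting $w_j$ near $p_j$. Then $d_G(w_j,w_k)\approx d(p_j,p_k)+(\text{small})$ through $ab$, or nearly $2\ell$ through $c$. This fails for nearby $p_j$, so I would instead pair and select only legs whose feet are far apart along $ab$. That may force the legs to have variable lengths, or force the segment $ab$ itself to be replaced by a longer path generated in the same way. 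If one matching layer cannot give separation above $2\pi$, the fallback is to attach, at each midpoint, a new pendant structure and then subdivide to create the degree-two vertices $V$ farther out. The catch is that this must not add a fifth generation step, so the balancing of $\ell$, $\lambda$, $\sigma$ and $s$ is where I expect the real work to be.
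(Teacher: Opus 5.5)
There is a genuine gap. Property (2) is never proved, and with the parameters you commit to it cannot hold. You correctly note that the third layer needs $d_G(w_i,w_j)>2\pi-\ell'$ for points of $W$ in different pairs. If $w_j$ lies on $L_j$ at distance $\mu$ from $c$, then in $G=ab\cup\bigcup_jL_j$ one has $d_G(w_i,w_j)=\min\{2\mu,\,2(\lambda-\mu)+|i-j|\sigma\}$. For adjacent legs, the two requirements $2\mu>2\pi-\ell'$ and $2(\lambda-\mu)+\sigma>2\pi-\ell'$ add up to $\sigma>4\pi-2\lambda-2\ell'\geq4(\pi-\ell)$. This contradicts your choice $\sigma=2\pi-2\ell$, which is only the girth threshold.

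Your fallbacks do not repair this as stated. Selecting legs with far-apart feet while keeping $w_j$ near $p_j$ fails, because the route through $ab$ then has length about $d(p_i,p_j)\leq\ell<\pi$. A further pendant layer at the midpoints would first appear in $A_5$, since the fan itself only appears in $A_3$.

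The missing idea is to place $w_j$ in the middle of the leg and to spread the feet a bit more. Put $\epsilon=\pi-\ell$ and take $\lambda=\ell'=\ell$ and $\sigma=6\epsilon$. This still gives about $\pi/(6\epsilon)$ legs, hence at least $2N$ for small $\epsilon$. Place $w_j$ at distance $\mu=\pi/2+\epsilon$ from $c$. Then:
\begin{itemize}
\item $G$ has girth $2\pi+4\epsilon$;
\item $W$ is $(\pi+2\epsilon)$-separated, and $\pi+2\epsilon>2\pi-\ell'$;
\item Lemma~\ref{lem:matching} yields degree-two midpoints that are $(2\pi+\epsilon)$-separated.
\end{itemize}
Your checks of $A_2$ and $A_3$ survive. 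The paths $cp_j$ and $ab$ are simple of length at most $\ell<\pi$ in a graph of girth at least $2\pi$, so they are the unique geodesics. Also $d_H(c,a)=\lambda+d(a,p_1)>\ell$ once $p_1\neq a$.

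Repaired in this way, the fan is a genuine one-matching alternative to the paper's proof. The paper instead generates a whole circle of length $L<3\ell$ already in $A_2$, from three equally spaced points. This leaves room for two matchings of length $\ell=\pi-t/2$: the first raises the separation to $s+\ell>2\pi-\ell$, and the second to $s+2\ell>2\pi$. The paper's graph also keeps all degrees between two and three. Your $G_0$ instead has the high-degree vertex $c$ and the leaves $a,b$, which the proposition does not forbid but which break the degree remarks made later.
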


\begin{proof}
Consider a circle $G=H_0$ of length $L\in(2\pi,3\pi)$. Choose two points
$x^\pm\in G$ with $d_G(x^+,x^-)>\pi$, and choose $2N$ distinct points
$x_i^+$ near $x^+$ and $2N$ distinct points $x_i^-$ near $x^-$, sufficiently
close that $d_G(x_i^+,x_j^-)>\pi$ for all $i,j$.
Let $W=\{x_i^\pm:1\leq i\leq2N\}$, and let $s>0$ be the minimum
distance between its distinct points. Choose $t$ such that
\[
 0<t<\min\{s,\pi\},\qquad d_G(x_i^+,x_j^-)>\pi+t
 \quad\text{for all }i,j.
\]
Set $\ell=\pi-t/2$, choosing $t$ small enough that $L/3<\ell$.

\emph{First matching.} Pair $x_i^+$ with $x_i^-$. Apply
Lemma~\ref{lem:matching} with separation parameter $s$: its assumptions
hold since $s+\ell>\pi$ and
$d_G(x_i^+,x_i^-)>\pi+t>2\pi-\ell$.
Attach intervals of length $\ell$ to obtain $H_1$, and denote their
midpoints by $u_i$. The set $W_1=\{u_i:1\leq i\leq2N\}$ is
$(s+\ell)$-separated, and
\[
 s+\ell>t+\ell=2\pi-\ell.
\]

\emph{Second matching.} Pair the points of $W_1$ arbitrarily and again
attach intervals of length $\ell$. Lemma~\ref{lem:matching}, with separation
parameter $s+\ell$, gives a graph $H=H_2$ of girth at least $2\pi$
and a set $V$ of exactly $N$ new midpoints satisfying
\[
 d_H(v,w)\geq s+2\ell=2\pi+(s-t)>2\pi
 \qquad(v,w\in V,\ v\neq w).
\]
Every attachment point is used in only one matching. Thus all vertex
degrees lie between two and three, and the vertices of $V$ have degree two.

Finally, choose $A$ to consist of three equally spaced points on the
original circle $G$. The three
circular arcs between consecutive points of $A$ have length $L/3<\ell$,
and every interval attached later has length $\ell<\pi$. By the girth bound,
all these paths remain unique geodesics in $H$. Thus the sequence in
\eqref{eq:seed-generation}, computed in $H$, satisfies
\[
 H_0\subset A_2,\qquad H_1\subset A_3,\qquad H=H_2\subset A_4.
\]
Since every $A_i$ is contained in $H$, this proves $A_4=H$ and hence
$\pconv^H(A)=H$.
\end{proof}

\subsection{The infinite sequence of attachments}\label{sec:link}
For $n\geq0$, set
\begin{equation}\label{eq:alpha}
 \alpha_n=(n+1)^{-2/3},\qquad
 s_0=0,\qquad s_n=\sum_{j=0}^{n-1}\alpha_j\quad(n\geq1).
\end{equation}
Then $0<\alpha_n\leq1$, $\alpha_n$ decreases to zero, and
\begin{equation}\label{eq:sums}
 \sum_{n=0}^{\infty}\alpha_n=\infty,\qquad
 \sum_{n=0}^{\infty}\alpha_n^2<\infty.
\end{equation}
Choose an integer $N\geq1$ and use the construction of
Proposition~\ref{prop:seed} to obtain
\[
 G_0:=H,\qquad V_0:=V.
\]
In particular, $|V_0|=N$, every vertex of $V_0$ has degree two, and distinct
points of $V_0$ have distance greater than $2\pi$ in $G_0$.

We will construct a sequence of finite graphs
$G_0\subset G_1\subset\cdots$, nonnegative integers $k_n$, and subsets
$V_n\subset G_n$ with the following properties:
\begin{itemize}
\item $k_n=|V_n|$, so $k_0=N$;
\item each $V_n$ consists of vertices of degree two in $G_n$;
\item $G_{n+1}$ is obtained from $G_n$ by attaching exactly $k_{n+1}$
intervals of length $2\alpha_n$ between pairs of distinct points of $V_n$,
with new, mutually disjoint interiors. These intervals are subdivided at
their midpoints, which form $V_{n+1}$;
\item each $G_n$ has girth at least $2\pi$;
\item all vertex degrees in $G_n$ lie between two and six.
\end{itemize}
We specify the attachments in the next subsection. First, we record the
consequences of these properties, without assuming that every $k_n$ is
positive.

If $k_{n+1}=0$, then $G_{n+1}=G_n$ and $V_{n+1}=\varnothing$, so all
subsequent graphs equal $G_n$. Thus we allow the sequence to stabilize.
In either case, put
\[
 G_\infty=\bigcup_{n=0}^{\infty}G_n.
\]
If the process terminates after finitely many stages, this union is just
the final finite graph.

The inclusion $G_n\subset G_{n+1}$ preserves the assigned lengths of
edges, but need not preserve distances: new intervals may create
shortcuts. Denoting the path metric of $G_n$ by $d_n$, we have
$d_{n+1}\leq d_n$ on $G_n$, and the inequality may be strict.

On the other hand, in a graph of girth at least $2\pi$, every simple path
of length at most $\pi$ is minimizing. Consequently, for all later stages,
\begin{equation}\label{eq:short-stability}
 d_j(x,y)=d_i(x,y)\qquad
 (j\geq i,\ x,y\in G_i,\ d_i(x,y)\leq\pi).
\end{equation}

For $x,y\in G_\infty$, define
\begin{equation}\label{eq:limit-metric}
 d_\infty(x,y)=\inf_{n:\,x,y\in G_n}d_n(x,y).
\end{equation}
Monotonicity of the metrics implies that $d_\infty$ is a pseudometric.
It is nondegenerate: for $x\neq y$, either some $d_i(x,y)\leq\pi$, in
which case all subsequent distances equal this positive value, or all
finite-stage distances exceed $\pi$, so $d_\infty(x,y)\geq\pi$.
In particular, \eqref{eq:short-stability} gives
\begin{equation}\label{eq:metric}
 d_\infty(x,y)=d_i(x,y)\qquad
 (x,y\in G_i,\ d_i(x,y)\leq\pi).
\end{equation}

\begin{proposition}\label{prop:proper}
The metric space $(G_\infty,d_\infty)$ is a proper CAT(1) metric graph
of girth at least $2\pi$. Every closed bounded ball meets only finitely
many edges, and
\[
 \pconv^{G_\infty}(A)=G_\infty.
\]
Either the sequence $(G_n)$ stabilizes and $G_\infty$ is finite, or
$k_n>0$ for every $n$ and $G_\infty$ is unbounded.
\end{proposition}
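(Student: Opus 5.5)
Most of the statement is immediate if the infinite graph is treated as the limit of the finite graphs $G_n$, using the stability relation \eqref{eq:metric}. The one substantive point is properness, and it rests on a lower bound for the distance from the core $G_0$. The plan has four steps.

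\textbf{Step 1: $d_\infty$ is the path metric and the girth is at least $2\pi$.} Every curve in $G_\infty$ lies in some $G_n$, so $d_\infty$ is the path metric of the increasing union. A simple closed curve in $G_\infty$ is compact and meets finitely many edges, so it lies in some $G_n$. There its length is at least $2\pi$, because each $G_n$ has girth at least $2\pi$. Hence the girth of $G_\infty$ is at least $2\pi$.

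\textbf{Step 2: properness and local finiteness.} This is the main step. The aim is a quantitative statement: a point of $V_n$, or of an interval attached at stage $n$, has $d_\infty$-distance at least roughly $s_n$ from $G_0$.
\begin{itemize}
\item Every point of $G_{n+1}\setminus G_n$ lies on an interval of length $2\alpha_n$ joining two points of $V_n$.
\item Any path from such a point back to $G_0$ must leave through one of these endpoints.
\item Inductively, reaching a point of $V_n$ from $G_0$ requires traversing at least half-intervals of lengths $\alpha_0,\dots,\alpha_{n-1}$. The reason is that points of $V_{j+1}$ are midpoints of intervals whose endpoints lie in $V_j$.
\end{itemize}
Hence $d_\infty(V_n,G_0)\ge s_n$, and a point of the stage-$n$ attachments is at distance at least $s_n$ from $G_0$. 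Since $s_n\to\infty$ by \eqref{eq:sums}, a ball $\clB_R(x)$ meets the attachments of only finitely many stages. So it lies in some finite $G_n$ and meets finitely many edges. A closed bounded ball is then a closed subset of the compact finite graph $G_n$, so it is compact. For this to work, the restriction of $d_\infty$ to a finite subgraph must be comparable to its own metric. That follows because any path leaving $G_n$ and returning must climb to a high stage, which costs length at least about $2(s_{m}-s_n)$.

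\textbf{Step 3: CAT(1) and the hull.} By Step 2, $G_\infty$ is a proper graph of girth at least $2\pi$, so it is CAT(1) by \cite[Proposition~6.8(2)]{Bal04}. For the hull, Proposition~\ref{prop:seed} gives $G_0\subset\pconv(A)$, using that short geodesics in $G_0$ remain geodesics by \eqref{eq:metric}. Each interval attached at stage $n$ has length $2\alpha_n\le 2<\pi$ and is a simple path, so it is the unique geodesic between its endpoints in $V_n$. By induction on $n$, all of $G_\infty$ lies in the $\pi$-convex hull.

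\textbf{Step 4: the dichotomy.} If some $k_n=0$, the sequence stabilizes and $G_\infty$ is finite. Otherwise $V_n\neq\varnothing$ for all $n$, and by Step 2 it contains points at distance at least $s_n\to\infty$ from $G_0$. So $G_\infty$ is unbounded.

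The main obstacle is the lower bound $d_\infty(V_n,G_0)\ge s_n$ in Step 2, specifically handling shortcuts that pass through higher stages. I would first try an induction on the minimal stage visited, showing that any path entering stage $m$ attachments and coming back pays at least twice the corresponding partial sum.
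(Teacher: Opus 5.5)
Your proposal is the paper's proof in essentially the same order. The key estimate is the same first-passage bound. A path from $G_n$ to a point outside $G_{n+k}$ must hit $V_{n+1},\dots,V_{n+k}$ for the first time in this order, and just before each first hit it traverses a full half-interval of length $\alpha_j$. From there you argue as the paper does: properness from $\sum_j\alpha_j=\infty$, girth and CAT(1) via finite stages and \cite[Proposition~6.8(2)]{Bal04}, the stage-by-stage hull argument, and unboundedness from the same bound. The ``shortcuts through higher stages'' you list as the main obstacle are already handled by your third bullet. The portions preceding the successive first hits lie in pairwise disjoint open half-intervals of different stages, and each is fully traversed. Hence no induction on the minimal stage visited is needed.

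One justification in Step~2 is wrong. It is false that a path leaving $G_n$ and returning must climb to a high stage. Going from $u\in V_n$ along a stage-$n$ interval to $v\in V_n$ returns after length $2\alpha_n$. Such shortcuts are exactly why $d_{n+1}<d_n$ can occur; the attachment rule joins pairs with $d_n(u,v)\geq 2\pi-2\alpha_n$. No comparability of metrics is needed, however. Since $d_\infty\leq d_n$ on $G_n$, the inclusion $(G_n,d_n)\to(G_\infty,d_\infty)$ is $1$-Lipschitz. So $G_n$ is $d_\infty$-compact, and a closed ball contained in it is compact. The local agreement of metrics needed for the metric-graph structure is just \eqref{eq:metric}.

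Finally, your Step~1 claims that curves, including simple closed curves, lie in a finite stage. This uses the fact that compact sets lie in a finite stage, which is what Step~2 proves, so Step~2 should come first.
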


\begin{proof}
For each $n$, the inclusion $(G_n,d_n)\to(G_\infty,d_\infty)$ is
$1$-Lipschitz and preserves lengths of curves by \eqref{eq:metric}.
Since $G_n$ is compact, this inclusion is also a topological embedding.
Given $x,y\in G_\infty$ and $\eta>0$, choose a stage $m$ containing
both points with $d_m(x,y)<d_\infty(x,y)+\eta$. A $d_m$-geodesic
between them has the same length with respect to $d_\infty$. Thus
$d_\infty$ is a length metric.

Let $m\geq n+2$, $x\in G_n$, and $y\in G_m\setminus G_{n+1}$.
Every path in $G_m$ from $x$ to $y$ must reach $V_{n+1}$ and therefore
traverse a half of one of the $k_{n+1}$ intervals attached in the step
$G_n\to G_{n+1}$. Such a half-interval joins a point of $V_n$ to a point
of $V_{n+1}$ and has length $\alpha_n$. Hence $d_m(x,y)\geq\alpha_n$.
The same argument applies in every later stage containing both points,
so $d_\infty(x,y)\geq\alpha_n$. In~particular,
\[
 \clB_{\alpha_n/2}(x)\subset G_{n+1}\qquad(x\in G_n),
\]
where the ball is taken with respect to $d_\infty$.

More generally, for $k\geq1$, a path from $x\in G_n$ to a point outside
$G_{n+k}$ must successively reach $V_{n+1},\ldots,V_{n+k}$. Before first
reaching $V_{j+1}$, it must traverse a half-interval from $V_j$, of length
$\alpha_j$. These portions of the path are disjoint. Applying this
observation in each finite stage containing the endpoints gives
\[
 d_\infty(x,y)\geq\sum_{j=n}^{n+k-1}\alpha_j
 \qquad(x\in G_n,\ y\in G_\infty\setminus G_{n+k}).
\]
Since $\sum_j\alpha_j=\infty$, for every $x\in G_n$ and $R>0$ we can
choose $k$ so that this sum exceeds $R$. Then
$\clB_R(x)\subset G_{n+k}$. The finite graph $G_{n+k}$ is compact also
for $d_\infty$, by continuity of its inclusion. The ball is closed in
this compact set, hence compact. Thus $G_\infty$ is proper.

Every point has a neighborhood contained in a finite stage. Since the
inclusion of that stage is a topological embedding and is locally isometric
by \eqref{eq:metric}, $G_\infty$ is a metric graph. Every bounded ball
meets only finitely many edges: it is contained in a finite stage, and
each vertex of that stage is incident to at most six edges in $G_\infty$.

Every simple closed curve is compact and therefore contained in a finite
stage. Its length is unchanged by the inclusion, so the girth of
$G_\infty$ is at least $2\pi$. Being a proper metric graph with this girth
bound, $G_\infty$ is CAT(1) by \cite[Proposition~6.8(2)]{Bal04}.

If some $k_n$ vanishes, the sequence stabilizes as observed above.
Otherwise, $G_\infty\setminus G_k$ is nonempty for every $k$. The distance
bound above, with $n=0$ and $k\to\infty$, then shows that $G_\infty$
is unbounded.

Finally, every interval attached in the step $G_n\to G_{n+1}$ remains
a geodesic of length $2\alpha_n<\pi$ in $G_\infty$, by \eqref{eq:metric}.
Consequently,
\begin{equation}\label{eq:stage-hull}
 G_{n+1}\subset G_n\cup
 \bigcup_{\substack{x,y\in G_n\\d_\infty(x,y)\leq2\alpha_n}}xy.
\end{equation}
Every geodesic of length less than $\pi$ in $G_0$ also remains a geodesic
in $G_\infty$. Thus $\pconv^{G_0}(A)=G_0$ implies
$G_0\subset\pconv^{G_\infty}(A)$. Iterating \eqref{eq:stage-hull} gives
\[
 G_\infty\subset\pconv^{G_\infty}(G_0)
 \subset\pconv^{G_\infty}(A)\subset G_\infty,
\]
which proves the required equality.
\end{proof}

\subsection{Attachment rules}
We now specify the attachments. The following procedure produces a sequence
with the properties listed above for every $N\geq1$. The only remaining
issue will be to choose $N$ so that the sequence does not stabilize.

Suppose that $G_n$ and the distinguished set $V_n$ of degree-two vertices
have been constructed. We start with $G_n^0:=G_n$. Once the enlargement $G_n^j$ of $G_n^0$ has been
constructed,
we regard $V_n$ as a subset of $G_n^j$. Distances between its points may
now be much smaller than in $G_n$.

If there are distinct $u,v\in V_n$, both of degree less than six in
$G_n^j$, such that
\[
 d_{G_n^j}(u,v)\geq2\pi-2\alpha_n,
\]
choose such a pair and attach to this pair an interval of length $2\alpha_n$, to obtain $G_n^{j+1}$.
If no such pair exists, stop. Subdivide all new intervals at their
midpoints, put $G_{n+1}:=G_n^j$, and let $V_{n+1}$ consist of these
midpoints. Then $k_{n+1}=|V_{n+1}|=j$.

We observe the following properties:
\begin{enumerate}
\item Each attachment increases the degrees of two vertices of $V_n$ by
one. Each vertex of $V_n$ starts with degree two and can receive at most
four new intervals. Hence the procedure stops after at most $2|V_n|=2k_n$
attachments.
\item Induction on $j$ and Lemma~\ref{lem:matching}, applied to the single
pair $\{u,v\}$ with $\ell=2\alpha_n$ and $s=2\pi-2\alpha_n$, show
that every $G_n^j$ has girth at least $2\pi$. Thus so does $G_{n+1}$.
\item Every vertex of $V_{n+1}$ has degree two in $G_{n+1}$. If all
vertex degrees in $G_n$ lie between two and six, the same holds in
$G_{n+1}$.
\item Caution: We cannot apply Lemma~\ref{lem:matching} to the simultaneous
attachment of all the new intervals, since we deliberately allow several
new intervals to be attached at the same~vertex. Otherwise,
$|V_{n+1}|$ would be smaller than $|V_n|$ whenever $V_n\neq\varnothing$,
and the large-step process $G_0\to G_1\to\cdots$ would eventually terminate.
\end{enumerate}

\begin{lemma}\label{lem:growth}
If the initial cardinality $N$ is sufficiently large, every generation $V_n$
is nonempty.
\end{lemma}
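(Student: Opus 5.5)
The plan is to prove a recursive lower bound for $k_{n+1}$ in terms of $k_n$ that survives as long as $k_n$ stays above a fixed threshold. Then $N$ is chosen above that threshold, and an induction keeps every $k_n$ above it.

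\textbf{Counting at the stopping time.} Fix $n$ and look at the moment the attachment procedure stops, after $j=k_{n+1}$ attachments. Call a vertex of $V_n$ \emph{saturated} if it has degree six, and let $U\subset V_n$ be the set of unsaturated vertices. Each attachment raises two degrees by one, and saturating a vertex uses four attachments at it. So there are at most $2j/4=j/2$ saturated vertices, and
\[
 |U|\geq k_n-\tfrac{j}{2}.
\]
By the stopping rule, any two distinct points of $U$ have distance less than $2\pi-2\alpha_n$ in $G_{n+1}$. So it suffices to prove the following \emph{cluster bound}: in $G_{n+1}$, every subset of $V_n$ with pairwise distances $<2\pi$ has cardinality at most some constant $M$ independent of $n$. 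Then $|U|\leq M$, which gives $j\geq 2(k_n-M)$, that is,
\[
 k_{n+1}\geq 2k_n-2M .
\]
If $N>2M$, induction gives $k_{n+1}-2M\geq 2(k_n-2M)>0$, so $k_n>2M$ for all $n$. In particular every $V_n$ is nonempty, which is what Lemma~\ref{lem:growth} claims.

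\textbf{Proving the cluster bound.} This is the main obstacle, because points of $V_n$ need not be far apart. Two midpoints of intervals attached at the same vertex of $V_{n-1}$ are only $2\alpha_{n-1}$ apart, and the new intervals of length $2\alpha_n$ create further shortcuts. My first attempt would proceed as follows.

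1. Take a pair $u,v$ in a cluster and a minimizing path between them of length $<2\pi$.
2. Project this path onto the ``level'' structure: $V_n$, $V_{n-1}$, and so on. Any path between points of $V_n$ either stays in the last layers or descends through half-intervals of lengths $\alpha_{n-1},\alpha_{n-2},\dots$ and comes back up.
3. Use the girth bound $2\pi$. Inside a ball of radius $<\pi$ the graph is a tree, so two points of a cluster are joined by a unique short geodesic, and these geodesics fit together in a tree-like way.
4. Use the degree bound six, together with the rule that each vertex of $V_{m}$ receives at most four intervals. The aim is to show that the branches of this tree that end in $V_n$ number at most some fixed $M$.

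I am not certain this yields a bound uniform in $n$. Because $\alpha_m\to0$, a ball of radius $2\pi$ can contain many levels.

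\textbf{Fallback if the absolute bound fails.} I would then bound $|U|$ relative to $k_n$ instead, say $|U|\leq k_n/2$. This needs two ingredients.

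1. Separation in $G_n$ for points in distinct ``families'' inherited from $V_0$. The descendants of distinct points of $V_0$ begin more than $2\pi$ apart, and shortcuts are only ever created between points already at distance $\geq 2\pi-2\alpha_m$.
2. A pigeonhole argument showing that a single family cannot contain most of $V_n$ while all of its members are pairwise close. Otherwise the procedure at the previous stage would still have found admissible pairs and attached more intervals.

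Either version of the bound yields exponential growth of $k_n$ once $N$ is large, which finishes the proof.
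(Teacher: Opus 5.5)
Your counting at the stopping time is correct and matches the paper: the unsaturated set $U\subset V_n$ has pairwise distances less than $2\pi-2\alpha_n$ in $G_{n+1}$, and $k_{n+1}\ge 2|V_n\setminus U|$. The gap is that you never prove a usable bound on $|U|$. Your main route needs a cluster bound $M$ independent of $n$, and nothing supports it. If a vertex of $V_{n-m}$ and all its descendants over $m$ generations are saturated, this gives $4^m$ distinct points of $V_n$ within distance $\alpha_{n-m}+\dots+\alpha_{n-1}$ of that vertex. For $m$ of order $n^{2/3}$ this distance is still below $\pi$, so such clusters can grow with $n$, exactly as you feared. The fallback does not repair this. ``Families inherited from $V_0$'' are not well defined, since a new interval may join vertices of different families. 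The pigeonhole step is not an argument. And even $|U|\le k_n/2$ would only give $k_{n+1}\ge k_n$, not the exponential growth you claim.

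The missing idea is to let the bound on $|U_n|$ grow with $n$, but subexponentially, and compare it with an exponential lower bound for $k_n$. A minimizing path of length less than $2\pi$ in $G_{n+1}$ between points of $U_n$ uses no edge of $G_0$. Otherwise it would contain a nonconstant subpath in $G_0$ joining two distinct points of $V_0$, which are more than $2\pi$ apart there; this is the correct form of your separation ingredient. Every other elementary edge has length $\alpha_m\ge\alpha_n$ for some $m\le n$, so such a path has at most $2\pi/\alpha_n$ edges. Since degrees are at most six, $|U_n|\le 6^{1+2\pi/\alpha_n}=6^{1+2\pi(n+1)^{2/3}}\le K(3/2)^n$. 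Then induct on $k_n\ge N(3/2)^n$ with $N\ge 4K$: the hypothesis gives $|U_n|\le k_n/4$, hence $k_{n+1}\ge 2\cdot\frac34 k_n=\frac32 k_n$. Your proposal never uses the rate $\alpha_n=(n+1)^{-2/3}$, yet it is essential. At most $2k_n$ intervals are attached per stage, so $k_n\le N2^n$, and the estimate on $|U_n|$ is useful only if $6^{2\pi/\alpha_n}$ grows more slowly than that. This holds for $\alpha_n=(n+1)^{-2/3}$ but fails, for instance, for $\alpha_n=1/(n+1)$.
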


\begin{proof}
After constructing $G_{n+1}$ from $G_n$, denote by $U_n\subset V_n$
the set of vertices at which fewer than four new intervals have been
attached. By the stopping rule, any two points of $U_n$ have distance
less than $2\pi$ in $G_{n+1}$; otherwise, another attachment would be possible.

A minimizing path in $G_{n+1}$ between two such points uses no edge of
$G_0$. Otherwise, it would contain a nonconstant subpath in $G_0$ whose
endpoints belong to $V_0$. These endpoints must be distinct, since a
minimizing path cannot return to a previously visited point. Their distance
in $G_0$ is greater than $2\pi$, a contradiction.

Every elementary edge added after $G_0$ and present in $G_{n+1}$ has length
at least $\alpha_n$, since its length is one of
$\alpha_0,\ldots,\alpha_n$. If $U_n$ is nonempty, fix $u\in U_n$.
Every point of $U_n$ can be reached from $u$ using at most $2\pi/\alpha_n$
new elementary edges. There are at most six choices at each step.
Consequently, for a constant $K>0$ independent of $N$ and $G_0$, we have
\begin{equation}\label{eq:unsaturated}
 |U_n|\leq6^{1+2\pi/\alpha_n}
 =6^{1+2\pi(n+1)^{2/3}}
 \leq K\left(\frac32\right)^n.
\end{equation}
 Certainly, \eqref{eq:unsaturated}  also holds
when $U_n$ is empty.

Choose $N\geq4K$. We prove by induction that
\begin{equation}\label{eq:growth}
 k_n\geq N\left(\frac32\right)^n\qquad(n\geq0).
\end{equation}
The case $n=0$ is immediate. Under the induction hypothesis,
\eqref{eq:unsaturated} gives $|U_n|\leq k_n/4$.

At each vertex in $V_n\setminus U_n$, exactly four new intervals have
been attached. Each interval contributes one midpoint to $V_{n+1}$,
and each midpoint is counted at most twice in this incidence count. Hence
\[
 k_{n+1}=|V_{n+1}|\geq2|V_n\setminus U_n|
 \geq\frac32k_n\geq N\left(\frac32\right)^{n+1}.\qedhere
\]
\end{proof}

We can now finish the proof of Proposition~\ref{prop:graph} and hence of Theorem~\ref{thm:main}.

\begin{proof}[Proof of Proposition~\ref{prop:graph}]
Choose $N$ as in Lemma~\ref{lem:growth}, and consider the increasing
sequence of finite graphs $G_0\subset G_1\subset\cdots$ constructed above.
Let $G_\infty$ be their union, equipped with the metric $d_\infty$.

By Lemma~\ref{lem:growth}, $k_n=|V_n|>0$ for every $n$.
Proposition~\ref{prop:proper} gives properness, unboundedness, and girth
at least $2\pi$ for $G_\infty$.
Set $Y=G_\infty$, and let $A\subset G_0$ be the three-point set from
Proposition~\ref{prop:seed}.

With $\ell$ as in Proposition~\ref{prop:seed}, set
\begin{equation}\label{eq:beta-choice}
 \beta_1=\beta_2=\beta_3=\frac\ell2,\qquad
 \beta_{n+4}=\alpha_n\quad(n\geq0).
\end{equation}
All these numbers lie in $(0,\pi/2)$, and their squares have finite sum
by \eqref{eq:sums}.

Form the sets $A_i$ by \eqref{eq:controlled-hulls}, using the final metric.
The three steps in \eqref{eq:seed-generation} are also admissible in $Y$:
their geodesics have length at most $\ell<\pi$ and are preserved by
\eqref{eq:metric}. Hence $G_0\subset A_4$.
Every interval attached in the step $G_n\to G_{n+1}$ has length
$2\alpha_n=2\beta_{n+4}$ and endpoints in $V_n\subset G_n$.
By \eqref{eq:stage-hull}, induction gives
\[
 G_n\subset A_{n+4}\qquad(n\geq0).
\]
Taking the union proves \eqref{eq:controlled-exhaustion}.
\end{proof}

\end{document}